\newcounter{dummy} \numberwithin{dummy}{section}
\newtheorem{theorem}[dummy]{Theorem}
\newtheorem{lemma}[dummy]{Lemma}
\theoremstyle{remark}
\newtheorem{remark}[dummy]{Remark}
\newcommand{\R}{\mathbb{R}}
\newcommand{\C}{\mathbb{C}}
\let\Re\relax
\DeclareMathOperator{\Re}{Re}
\let\Im\relax
\DeclareMathOperator{\Im}{Im}
\DeclareMathOperator{\spn}{span}
\DeclareMathOperator{\Diff}{Diff}
\DeclareMathOperator{\GL}{GL}
\DeclareMathOperator{\Lie}{Lie}
\newcommand{\scrF}{\mathscr{F}}
\newcommand{\s}[1]{\Gamma(#1)}
\newcommand{\E}{\mathcal{E}}
\newcommand{\LM}{\mathcal{M}}
\numberwithin{equation}{section}
\title[Controllability on landmark manifolds]{Controllability on landmark manifolds for shapes and neural ODEs}
\author[E.~Grong]{Erlend Grong}
\author[S.~Vega-Molino]{Sylvie Vega-Molino}
\thanks{The authors are supported by the grant GeoProCo from the Trond Mohn Foundation - Grant TMS2021STG02 (GeoProCo).}
\address{University of Bergen, Department of Mathematics, P.O.~Box 7803, 5020 Bergen, Norway}
\email{erlend.grong@uib.no}
\address{Royal Norwegian Naval Academy,
Sjøkrigsskoleveien 32,
5165 Laksevåg, Norway}
\email{svegamolino@mil.no}
\subjclass[2010]{93B05,57N25}
\keywords{Controllability, Landmark Manifolds, Shape Theory, Controlled neural ODEs, Diffeomorphisms}
\begin{document}

%%%%%%%%%%%%%%%%%%%%%%%%%%%%%%%%%%%%%%%%%%%%%%%%%%%%%%%%%%
%% Headers %%%%%%%%%%%%%%%%%%%%%%%%%%%%%%%%%%%%%%%%%%%%%%%
%%%%%%%%%%%%%%%%%%%%%%%%%%%%%%%%%%%%%%%%%%%%%%%%%%%%%%%%%%

\begin{abstract}
Landmark manifolds consist of a collection of distinct points, and dynamics on this manifold can be used to represent flows, such as solutions of ODEs and flows deforming a shape. We will consider landmark configurations in the Euclidean space and how such configuration can be connected through flows of vector field. For every dimension equal or larger than two, we explicitly describe two vector fields whose flows can connect any pair of landmark configuration regardless of how many points are in the configuration. This property is called the exact universal interpolation property. For the case of dimension one, we show the same result holds for landmark configurations as long as they have the same relative order. In all dimensions, we are able to achieve controllability by combining a constant vector field with a polynomial vector field of degree three.
\end{abstract}

\maketitle

%%%%%%%%%%%%%%%%%%%%%%%%%%%%%%%%%%%%%%%%%%%%%%%%%%%%%%%%%%
%% Background %%%%%%%%%%%%%%%%%%%%%%%%%%
%%%%%%%%%%%%%%%%%%%%%%%%%%%%%%%%%%%%%%%%%%%%%%%%%%%%%%%%%%

\section{Introduction}

In this paper, we show that for an arbitrary number of landmarks in $\mathbb{R}^d$, we can obtain controllability by the use of only two vector fields. These landmarks show up as tools of approximation for shapes and for solving Neural ODEs. If $M$ is a finite-dimensional manifold and $\E \subseteq TM$ is a subbundle of the tangent bundle representing possible directions of movement, then controllability with respect to $\E$ means that flows tangent to this subbundle can transport any initial point $x \in M$ to any final point $y \in M$, see e.g., \cite{Sussmann-1990,AS04, BoscainSigalotti-2019}. The same question can also be asked when it comes to two different shapes rather than points, where shapes in this setting denotes embeddings $\Sigma \hookrightarrow M$ of a fixed manifold~$\Sigma$. The question of controllability in such shape spaces has been previously considered from the perspective of optimal control theory in  Young \cite{Younes-2012} and Arguillère \cite{Arguillre-2014a, Arguillre-2014b, Arguillre-2015}, see also~\cite{Gris-2018}.  In particular, flows of the submodule generated any bracket-generating family gives the entire identity component of the diffeomorphism group \cite{Agrachev-2009}, see also \cite{GrSch25,GGS24} for the case of manifolds with boundary.

Rather than taking the infinite dimensional point of view, we approximate the embedding of $\Sigma$ by a finite collection of district points $\mathbf{x} = (x_1, \dots, x_n)$, each in $\mathbb{R}^d$, which is called a landmark configuration. As an approximation of matching two different shapes, we ask if we can move any initial configuration $\mathbf{x} = (x_1, \dots, x_n)$ along given vector fields $X_1, \dots, X_m$ to a final configuration $\mathbf{y} = (y_1, \dots, y_n)$, meaning that the flow has to map $x_i$ to $y_i$ simultaneously for each $i=1, \dots,n$. See Section~\ref{sec:Shapes} for further details. The geometry of the landmark manifolds is described in, e.g., \cite{Younes-2010,MicheliMichorMumford-2012,Arnaudon-2018} and has been addressed in control theory as \emph{ensemble controllability}, see, e.g., \cite{ABS16,Che19}.
For convergence results as the number of landmarks $n$ approach infinity, we refer to \cite{HMPS23}.
Controllability of landmarks representing shapes is also relevant for determining the most probable way of connecting two landmark configurations under a stochastic flow, see \cite[Section~6]{GrongSommer-2022} for details.

Furthermore, controllability on landmark manifolds is related to neural networks learning an ODE model \cite{ChenNeurIPS,Ruiz-Balet,ALVAREZLOPEZ2024106640,ChengControl}, see also Section~\ref{sec:NeuralODE}. Given a model
\begin{equation} \label{modelNeurelOLS} \frac{d}{dt} \varphi_t^{\theta} = \theta^1(t)X_1(\varphi_t^\theta) + \dots +\theta^m(t) X_m(\varphi_t^\theta), \qquad \varphi_t^\theta: \mathbb{R}^d \to \mathbb{R}^d,\end{equation}
we want to learn $\theta = (\theta^1, \dots, \theta^m)$ by minimizing a given loss function measuring the difference between a dataset of targets and the values of $\varphi_{t}^{\theta}$ at given landmarks and times. Proving that the model \eqref{modelNeurelOLS} has the ability match any targets is again a question related to the controllability on the landmark manifold.

Our main result is that for a given dimension $d$ for the ambient space $\R^d$, we construct a pair of vector fields that give complete controllability of an $n$-landmark configuration, independent of the number of points $n$.
\begin{theorem} \label{th:MAIN}
For any $d> 1$, there exist two vector fields $X$ and $Y$ on $\mathbb{R}^d$ with the following property: For any $n \geq 1$ and any $x_1, \dots, x_n, y_1, \dots, y_n \in \R^d$ with
$$x_{i_1} \neq x_{i_2}, \qquad y_{i_1} \neq y_{i_2}, \qquad \text{whenever $i_1 \neq i_2$,}$$
there exists a finite collection of numbers $s_1, s_2,\dots, s_l, t_1, t_2, \dots, t_l \in \R$ such that
$$e^{s_1 X} \circ e^{t_1 Y} \circ e^{s_2 X} \circ e^{t_2 Y} \circ \cdots \circ e^{s_l X} \circ e^{t_l Y}(x_i ) =y_i, \qquad \text{for all $i=1,\dots,n$.}$$
We can choose $X$ and $Y$ so that $X$ is a constant vector field and $Y$ is a homogeneous cubic polynomial vector field. For $d=1$, the result also holds as long as $x_1, \dots, x_n$ and $y_1, \dots, y_n$ have the same relative order relations.
\end{theorem}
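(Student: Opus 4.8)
The plan is to convert the statement into a bracket-generating (Lie algebra rank) condition on the landmark manifold and then verify that condition. For fixed $n$ write $L_n = \{(x_1,\dots,x_n)\in(\R^d)^n : x_i\neq x_j \text{ for } i\neq j\}$; a vector field $Z$ on $\R^d$ lifts to $\widehat Z(x_1,\dots,x_n)=(Z(x_1),\dots,Z(x_n))$ on $L_n$, and $\widehat{[Z,W]}=[\widehat Z,\widehat W]$, so the Lie algebra $\mathfrak g=\Lie(X,Y)$ generated by $X,Y$ lifts to a Lie algebra of vector fields on $L_n$. The finite alternating compositions $e^{s_1 X}\circ e^{t_1 Y}\circ\cdots\circ e^{s_l X}\circ e^{t_l Y}$ (for $s_i,t_i\in\R$) are exactly the action on $(x_1,\dots,x_n)$ of the group generated by the (local) flows of $X$ and $Y$, so by the orbit theorem \cite{Sussmann-1990,AS04} it suffices to establish: (i) for every choice of distinct $x_1,\dots,x_n$ the evaluation $\{(Z(x_1),\dots,Z(x_n)):Z\in\mathfrak g\}$ is all of $(\R^d)^n$, so that each orbit is open; and (ii) the set we need to fill is connected. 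Point (ii) holds because $L_n$ is connected for $d\ge 2$, whereas for $d=1$ its connected components are precisely the $n!$ chambers cut out by the relative-order relations, and the flows, being isotopic to the identity, preserve each chamber — which is exactly why the $d=1$ statement carries the extra hypothesis.

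Step (i) reduces to a clean algebraic assertion: it is enough that $\mathfrak g$ contains $\mathfrak W_d$, the Lie algebra of all polynomial vector fields on $\R^d$. Indeed, given distinct $x_1,\dots,x_n$ and arbitrary targets $w_1,\dots,w_n\in\R^d$, for each $i$ and each $j\ne i$ pick an affine functional vanishing at $x_j$ but not at $x_i$; the product of these functionals is a polynomial $p_i$ with $p_i(x_j)=0$ for $j\ne i$ and $p_i(x_i)\ne 0$, and after rescaling $Z=\sum_i p_i\, w_i$ is a polynomial vector field with $Z(x_i)=w_i$ for all $i$. Thus the whole theorem comes down to exhibiting a constant vector field $X$ and a nonzero homogeneous cubic $Y$ with $\Lie(X,Y)\supseteq\mathfrak W_d$.

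I would build $\mathfrak W_d$ in two stages. \emph{Stage one}: with $X=\partial_1$ and a carefully chosen cubic $Y$ (for instance a cyclic sum $\sum_i x_{i+1}^3\,\partial_i$, indices mod $d$, or $\sum_i\langle\ell_i,x\rangle^3 v_i$ for well-placed covectors $\ell_i$ and vectors $v_i$), the iterated brackets $[X,Y]$, $[X,[X,Y]]$, $[X,[X,[X,Y]]]$ descend through a quadratic, a linear and a constant field; feeding these back against $Y$ and against one another, and arranging $Y$ so that the outputs are in general position, one recovers the affine subalgebra $\mathfrak{aff}_d=\mathfrak{gl}_d\ltimes\R^d$. \emph{Stage two}: upgrade $\Lie(\mathfrak{aff}_d,Y)$ to all of $\mathfrak W_d$ by an induction on degree, using that $\operatorname{ad}(\partial_i)$ carries the homogeneous degree-$k$ part $P_k$ onto $P_{k-1}$, that $\mathfrak{gl}_d$ acts on each $P_k$, and that $\mathfrak{aff}_d$ together with a single element of each degree already generates the corresponding layer — so that a cubic $Y$, bracketed up and down the degree filtration against $\mathfrak{aff}_d$, sweeps out every $P_k$. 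The one-dimensional case is the toy model: $X=\partial_x$, $Y=x^3\partial_x$, where $[\partial_x,x^3\partial_x]=3x^2\partial_x$ and $[\partial_x,x^2\partial_x]=2x\partial_x$ give $x\partial_x,x^2\partial_x$, then $[x^2\partial_x,x^3\partial_x]=-x^4\partial_x$, and the identity $[x^a\partial_x,x^b\partial_x]=(b-a)x^{a+b-1}\partial_x$ then produces all $x^k\partial_x$.

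The main obstacle is Stage two together with pinning down the explicit cubic in Stage one: keeping the bracket bookkeeping under control uniformly in $d$, and in particular ruling out that $\Lie(\mathfrak{aff}_d,Y)$ is some unforeseen infinite-dimensional proper subalgebra of $\mathfrak W_d$ rather than $\mathfrak W_d$ itself. It is worth emphasizing why the target must be the full infinite-dimensional $\mathfrak W_d$: the interpolation in step (i) uses polynomial vector fields whose degree grows with $n$, so no fixed finite-dimensional Lie algebra of vector fields — in particular nothing contained in the projective algebra $\mathfrak{sl}_{d+1}$ of vector fields of degree $\le 2$ — can satisfy the rank condition for all $n$; a homogeneous cubic is the natural lowest-degree ingredient that forces us out of such finite-dimensional algebras. (A minor technical remark: a homogeneous cubic has an incomplete flow, but each $e^{t_k Y}$ in the composition only needs to be defined near the finitely many points being moved for a short time, which the orbit-theorem framework with local flows accommodates.)
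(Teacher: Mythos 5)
Your reduction is exactly the paper's: lift vector fields to the landmark manifold via the Lie algebra homomorphism $Z\mapsto \widehat Z$, invoke the orbit/Chow--Rashevsky theorem, handle connectivity (one component for $d\ge 2$, order-preserving chambers for $d=1$), and reduce pointwise surjectivity of the evaluation map to polynomial interpolation at the $n$ distinct landmarks --- this last step is literally the paper's Lemma~4.2. So the framework is sound and matches the paper.

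The gap is that you never actually produce and verify the pair $(X,Y)$; you yourself flag ``pinning down the explicit cubic'' and ``ruling out an unforeseen proper subalgebra'' as the main obstacle, and that obstacle is the entire content of the theorem. The paper's proof for $d\ge 3$ is a long explicit bracket computation showing that $X=\partial_1$ and $Y=\|x\|^2\sum_k x^k\partial_{k+1}$ generate every monomial field $(x^j)^\alpha\partial_k$ and from these every polynomial field; nothing in your Stage one/Stage two sketch substitutes for that computation. Moreover, two of your structural claims are shaky as stated. First, the homogeneous layer $P_k\cong S^k(\R^d)^*\otimes\R^d$ is not an irreducible $\mathfrak{gl}_d$-module (it splits into at least two irreducible summands for $k\ge 1$), so ``$\mathfrak{aff}_d$ together with a single element of each degree generates the layer'' requires that element to have nonzero projection onto every summand --- exactly the kind of thing that has to be checked for the specific $Y$. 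Second, your criterion $\Lie(X,Y)\supseteq\mathfrak{W}_d$ is sufficient but demonstrably fails for the paper's own choice in $d=2$: there $Y$ corresponds to $iz^3\partial_z$ and the generated algebra consists only of holomorphic polynomial fields, a proper subalgebra of $\mathfrak{W}_2$; the paper instead proves surjectivity of the evaluation at $n$ points directly via a complex Vandermonde determinant (and a real one for $d=1$). So in low dimensions you would need either a different cubic or the weaker evaluation-surjectivity criterion; in every dimension the decisive verification is missing.
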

Following the terminology of \cite{cuchiero2020deep}, we can state Theorem~\ref{th:MAIN} as follows: For any $d \geq 1$, we can find two vector fields $X$ and $Y$ such that the family $\spn \{X,Y\}$ has the \emph{exact universal approximation property}. We remark that our result for the case $d=1$ is special, as we cannot continuously interchange the order of two real numbers without them at some point being equal. Our result for universal interpolation with two vector fields improves the result shown in \cite{cuchiero2020deep} where the same result was shown with five vector fields, (also polynomial degree 3 or less), as well as result in \cite{AgSa22} where the number of vector fields depends on the dimension $d$. Similar to these references, our main approach will be to use the bracket-generating condition for the proof, see Section~\ref{sec:Prelim} for details. Our results are local, and hence we may replace $\mathbb{R}^d$ with any connected open set $\Omega \subseteq \mathbb{R}^d$. Theorem~\ref{th:MAIN} follows from three theorems stated in \Cref{th:main1}, \Cref{th:main2} and \Cref{th:Higher} corresponding to the cases $d =1$, $d=2$ and $d\geq 3$ respectively, with
\begin{center}
\begin{tabular}{l|c|c}
& $X$ & $Y$ \\ \hline 
$d=1$ & $\partial_x$ & $x^3 \partial_x$ \\
$d=2$ & $\partial_x$ & $-y(3x^2 -y^2) \partial_x + x(x^2 -3y^2) \partial_y$ \\
$d \geq 3$ & $\partial_{x^1}$ & $\|x\|^2 \sum_{j=1}^d x^j \partial_{x^{j+1}}$
\end{tabular}
\end{center}
where the indices in the case of $d \geq 3$ are given modulo $d$.

\section{Preliminaries and motivation}
\subsection{Controllability and the bracket-generating condition} \label{sec:Prelim}
Let $M$ be a connected manifold. We will write its tangent bundle as $TM$ and its sections, i.e., vector fields on $M$ as $\Gamma(TM)$. Similarly, we if $\E \subseteq TM$ is a subbundle of the tangent bundle, we write $\Gamma(\E)$ for vector field taking values in $\E$. If $X$,$Y$ are two vector fields, we define its Lie bracket $[X,Y]$ as
$$[X,Y]f = X(Yf) - Y(Xf), \qquad \text{for any $f \in C^\infty(M)$}.$$
An absolutely continuous curve $\gamma:[0,1] \to M$ is called \emph{horizontal} with respect to the subbundle $\E\subseteq TM$ if $\dot \gamma(t) \in \E_{\gamma(t)}$ for almost every~$t$. For every $q \in M$, we define the orbit $O_q$ of $\E$ at $q$ by
$$O_q = \left\{ p \in M \, : \, \begin{array}{c} \text{there is a horizontal curve} \\ \text{with $\gamma(0) = q$ and $\gamma(1) = p$}\end{array} \right\}.$$
We say that we have \emph{complete controllability} with respect to $\E$ if $O_q =M$ for any $q \in M$. The following sufficient condition for controllability is useful. For every $q \in M$, we define
$$\Lie(\E)_q = \left\{ X_{i_1}|_q, [X_{i_1},X_{i_2}]|_q, [X_{i_1}, [X_{i_2}, X_{i_3}]]|_q , \dots, \, : \, X_{i_j} \in \Gamma(\E) \right\} \subseteq T_qM.$$
We say that $\E$ is \emph{bracket-generating} if $\Lie(\E)_q = T_q M$ for any $q \in M$, i.e., if we can span the entire tangent bundle vector fields with values in $\E$ and their iterated Lie brackets.
\begin{theorem}[Chow-Rashevsky Theorem \cite{rashevsky1938connecting,Chow39}] \label{th:CR}
If $\E$ is bracket-generating, then $O_q = M$ for any $q \in M$.
\end{theorem}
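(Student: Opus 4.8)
\emph{Proof plan.} The plan is to prove that every orbit $O_p$ is an \emph{open} subset of $M$, after which connectedness finishes the argument. First I would record that the relation $p \sim q \iff p \in O_q$ is an equivalence relation: reflexivity is the constant curve, symmetry holds because a horizontal curve traversed backwards is again horizontal, and transitivity holds by concatenation. The key structural point is that $\E$ is a \emph{linear} subbundle, so $Y \in \Gamma(\E)$ implies $-Y \in \Gamma(\E)$ and the reversed flow $e^{-tY} = e^{t(-Y)}$ is again a horizontal flow; consequently, for any $Y_1, \dots, Y_N \in \Gamma(\E)$ and any \emph{signed} times $t_1,\dots,t_N$, the point $e^{t_N Y_N}\circ\cdots\circ e^{t_1 Y_1}(q)$ lies in $O_q$. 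The orbits therefore partition $M$ into disjoint sets, and once each is shown to be open, the connectedness of $M$ forces a single orbit, equal to all of $M$.

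The heart of the proof is to show $O_p$ contains a nonempty open set. I would build, by induction on $r$, horizontal vector fields $Y_1, \dots, Y_r \in \Gamma(\E)$ and times $\bar t \in \R^r$ for which the composite flow map
$$\Phi_r(t_1, \dots, t_r) = e^{t_r Y_r}\circ \cdots \circ e^{t_1 Y_1}(p)$$
has rank $r$ at $\bar t$; by the previous paragraph its image always lies in $O_p$. For $r = 1$ any $Y_1$ with $Y_1|_p \neq 0$ works (such a field exists because $\Lie(\E)_p = T_pM \neq 0$ forces $\E_p \neq 0$, the case $\dim M = 0$ being trivial). For the inductive step, assume $r < k := \dim M$. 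Lower semicontinuity of the rank lets me pass to a neighborhood on which $\Phi_r$ is a rank-$r$ immersion, hence locally an embedding onto an $r$-dimensional submanifold $S \subseteq O_p$. I claim some $W \in \Gamma(\E)$ is transverse to $S$ at some point $p' \in S$: otherwise every horizontal field is tangent to $S$, and since vector fields tangent to a submanifold are closed under the Lie bracket, every iterated bracket of horizontal fields is tangent to $S$ too, giving $\Lie(\E)_{p'} \subseteq T_{p'}S$ of dimension $r < k$ and contradicting $\Lie(\E)_{p'} = T_{p'}M$. Writing $p' = \Phi_r(t')$ and appending the flow of $W$, the map $(t, s) \mapsto e^{sW}\circ\Phi_r(t)$ has rank $r+1$ at $(t', 0)$, completing the induction. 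After $k$ steps the composite $\Phi_k$ has full rank, so by the rank theorem its image, contained in $O_p$, contains a nonempty open subset of $M$.

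Finally I would promote "nonempty interior" to genuine openness. Given any $p'' \in O_p$, run the construction based at $p''$ to obtain an open $V \subseteq O_{p''} = O_p$ that is a neighborhood of the full-rank image point $p_\ast = e^{\bar t_k Y_k}\circ\cdots\circ e^{\bar t_1 Y_1}(p'')$. The composite flow $\Psi = e^{-\bar t_1 Y_1}\circ \cdots \circ e^{-\bar t_k Y_k}$ is a diffeomorphism carrying horizontally reachable points to horizontally reachable points with $\Psi(p_\ast) = p''$, so $\Psi(V)$ is an open neighborhood of $p''$ contained in $O_p$; hence $O_p$ is open. I expect the main obstacle to be the inductive rank-increasing step, namely the transversality dichotomy in which the bracket-generating hypothesis must be converted into a horizontal direction escaping the current submanifold $S$; the clean device is the Frobenius-type fact that tangency to $S$ is preserved under Lie brackets, turning any failure to increase rank into a violation of $\Lie(\E)_{p'} = T_{p'}M$. (A more computational route establishes reachability of bracket directions directly via the commutator expansion $e^{-tY}\circ e^{-tX}\circ e^{tY}\circ e^{tX}(p) = p + t^2[X,Y]|_p + O(t^3)$, but this forces a $\sqrt{\,\cdot\,}$ reparametrization and delicate error control that the tangency argument sidesteps.) A secondary technical point, the passage from open image to open orbit, is handled by the flow-translation $\Psi$ above.
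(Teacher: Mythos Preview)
The paper does not give its own proof of this theorem: it is stated as the classical Chow--Rashevsky theorem with references, and the paper merely remarks that it is a special case of the Orbit Theorem (citing \cite[Chapter~5]{AS04}). There is therefore no in-paper argument to compare against.

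Your proposal is a correct and standard proof. The equivalence-relation reduction, the inductive construction of the rank-$r$ flow map $\Phi_r$, the Frobenius-type dichotomy (if every section of $\E$ were tangent to the immersed image $S$ then so would all iterated brackets, contradicting $\Lie(\E)_{p'}=T_{p'}M$), and the final flow-translation to upgrade ``nonempty interior'' to ``open'' are all sound. One small point worth making explicit: when you say ``otherwise every horizontal field is tangent to $S$'', you are using that the negation is tangency at \emph{every} point of $S$, which is exactly what is needed for the bracket-closure argument; you have this right, but in a written version it deserves a sentence. The alternative route you mention via the commutator expansion is indeed messier, and your tangency argument is the cleaner choice.
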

We remark that $q \in O_p$ if and only if $p \in O_q$, and hence if $O_q = M$ for one $q \in M$, then this result will hold for the orbit at any other point. Theorem~\ref{th:CR} is a special case of the Orbit Theorem, see e.g. \cite[Chapter~5]{AS04}. Following the latter reference, we also observe that if $X_1, \dots, X_m$ is a collection of vector fields spanning $\E$, then
$$O_q = \{ e^{t_1X_{i_1}} \circ e^{t_2 X_{i_2}} \circ \cdots \circ e^{t_l X_{i_l}}(q) \,: t_j \in \mathbb{R},  i_j  \in \{1,\dots, m\}, l \geq 1 \}.$$
Here $e^{tX}$ denotes the (local) flow of the vector field $X$, i.e., $\varphi_t(q) =e^{tX}(q)$ solves the equation $\frac{d}{dt} \varphi_t(q) = X(\varphi_t(q))$, $\varphi_0 = \mathrm{id}_M$. If $M$ multiply connected, then the curve $t \mapsto e^{tX}(q)$ is smooth and so $e^{tX}(q)$ will always be in the same connected component as $q\in M$. Hence, if $M$ is not connected, the bracket-generating condition implies that $O_q$ is the connected component of $q \in M$.

Let us consider the special case of $M = \mathbb{R}^d$. Controllability of with respect to a vector bundle $\E$ with basis $X_1, \dots, X_m$ means that for any pair of points $x, y \in \mathbb{R}^d$, we have 
$$y = e^{t_1X_{i_1}} \circ e^{t_2 X_{i_2}} \circ \cdots \circ e^{t_l X_{i_l}}(x)$$
for some $t_j \in \mathbb{R}$, $i_j \in \{ 1, \dots, m\}$, $l \geq 1$. We can ask a similar question multiple points. That is, if we have two collections of $n$ district points $\mathbf{x} = (x_1,\dots, x_n)$ and $\mathbf{y} = (y_1, \dots, y_n)$, each point in $\mathbb{R}^d$, we want to know if there exists $t_j \in \mathbb{R}$, $i_j \in \{ 1, \dots, m\}$, $l \geq 1$, such that
\begin{equation} \label{ManyPoints} y_i = e^{t_1X_{i_1}} \circ e^{t_2 X_{i_2}} \circ \cdots \circ e^{t_l X_{i_l}}(x_i) \qquad \text{for all $i=1,\dots,n$}.\end{equation}
We can understand such questions in terms of controllability on a manifold as follows. We consider the \emph{landmark manifold} as
$$\LM^n(\mathbb{R}^d) = \{ \mathbf{x} =(x_1, \dots, x_n) \, : \, x_{i} \in \mathbb{R}^d, x_{i_1} \neq x_{i_2} \text{ whenever $i_1 \neq i_2$} \},$$
with each element $\mathbf{x} \in \LM = \LM^n(\mathbb{R}^d)$ called a landmark configuration. Furthermore, there is a canonical map $\ell:\Gamma(T\mathbb{R}^d) \to \Gamma(T\LM)$ defined such that
$$\ell X(x_1, \dots, x_n) := (X(x_1), \dots, X(x_n)).$$
Then we can rewrite \eqref{ManyPoints} as
\begin{equation} \mathbf{y} = e^{t_1 \ell X_{i_1}} \circ e^{t_2 \ell X_{i_2}} \circ \cdots \circ e^{t_l \ell X_{i_l}}(\mathbf{x}) \qquad \text{for all $i=1,\dots,n$},\end{equation}
and being able to do this for any pair of landmarks $\mathbf{x}, \mathbf{y} \in \LM$ is equivalent to having complete controllability with respect to the subbundle $\spn \{ \ell X_1, \dots, \ell X_m \}$ of $T\LM$. To prove the bracket-generating condition for this subbundle, we will use the following lemma.

\begin{lemma}  \label{lemma:LieAlgHom}
The mapping $\ell: \Gamma(T\mathbb{R}^d) \to \Gamma(T\LM)$ of vector fields $X \mapsto \ell X$ is a Lie algebra homomorphism, i.e., for any $X, Y \in \Gamma(T\mathbb{R}^d)$, we have
$$[\ell X, \ell Y] = \ell [X, Y].$$
\end{lemma}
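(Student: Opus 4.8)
The plan is to verify the identity $[\ell X, \ell Y] = \ell[X,Y]$ by a direct local computation in coordinates, exploiting the fact that $\LM^n(\mathbb{R}^d)$ is an open subset of the product $(\mathbb{R}^d)^n$ and that $\ell$ acts on each factor in the obvious diagonal way. Concretely, I would introduce coordinates $(x_1, \dots, x_n)$ on $\LM$ with each $x_k = (x_k^1, \dots, x_k^d) \in \mathbb{R}^d$, so that a vector field $X = \sum_{a=1}^d X^a(x) \, \partial_{x^a}$ on $\mathbb{R}^d$ pushes forward to
$$\ell X = \sum_{k=1}^n \sum_{a=1}^d X^a(x_k) \, \partial_{x_k^a},$$
that is, the $k$-th block of $\ell X$ depends only on the $k$-th block of coordinates, and the coefficient functions are the ``same'' functions $X^a$ evaluated at the point $x_k$.

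The key observation is that the blocks decouple: for any smooth function $f$ on $\LM$, applying $\ell X$ then $\ell Y$ and antisymmetrizing, the cross-terms $\partial_{x_k^a}$ acting on $Y^b(x_j)$ for $j \neq k$ vanish because $Y^b(x_j)$ does not involve the coordinates $x_k$. So $[\ell X, \ell Y]$ reduces to a sum over $k$ of the bracket computed ``within block $k$,'' which is exactly the usual coordinate formula for $[X,Y]$ on $\mathbb{R}^d$ evaluated at $x_k$. Writing this out, $[X,Y] = \sum_{a,b}\left( X^b \partial_{x^b} Y^a - Y^b \partial_{x^b} X^a\right)\partial_{x^a}$, and block-$k$ of $[\ell X, \ell Y]$ is the same expression with $x$ replaced by $x_k$ and $\partial_{x^a}$ by $\partial_{x_k^a}$; summing over $k$ gives precisely $\ell[X,Y]$.

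Alternatively, and perhaps more cleanly, I would argue via the evaluation/projection maps: for each $k$, let $\pi_k : \LM \to \mathbb{R}^d$ be the projection onto the $k$-th landmark. Then $\ell X$ is $\pi_k$-related to $X$ for every $k$ (i.e., $d\pi_k \circ \ell X = X \circ \pi_k$), since by construction the $k$-th component of $\ell X(\mathbf{x})$ is $X(x_k) = X(\pi_k(\mathbf{x}))$. Because the Lie bracket is natural with respect to $\varphi$-relatedness — if $X_1$ is $\varphi$-related to $X_2$ and $Z_1$ is $\varphi$-related to $Z_2$ then $[X_1,Z_1]$ is $\varphi$-related to $[X_2,Z_2]$ — it follows that $[\ell X, \ell Y]$ is $\pi_k$-related to $[X,Y]$ for every $k$. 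A vector field on $\LM$ that is $\pi_k$-related to $[X,Y]$ for all $k$ must, by examining its blocks, equal $\ell[X,Y]$, since knowing all the $d\pi_k$ images of a tangent vector at $\mathbf{x}$ determines it. Either route is essentially immediate; the only thing requiring care is bookkeeping of indices and making explicit that the off-block derivatives vanish, so I do not anticipate a genuine obstacle here — this lemma is a soft structural fact whose role is simply to let us reduce bracket-generating computations on $\LM$ to computations with $X$, $Y$ on $\mathbb{R}^d$.
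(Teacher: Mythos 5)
Your first (coordinate) argument is exactly the paper's proof: the paper writes $\ell X = \sum_{i}\sum_j X^j_{(i)}\partial^i_j$, uses $\partial^i_j f_{(i_2)} = \delta^i_{i_2}(\partial_j f)_{(i_2)}$ to kill the off-block terms, and reads off $\ell[X,Y]$, so your proposal is correct and essentially identical. Your alternative via $\pi_k$-relatedness is also valid (and arguably cleaner, since it avoids index bookkeeping by invoking naturality of the bracket plus the fact that the $d\pi_k$ jointly determine a tangent vector on $\LM$), but it is not needed beyond the direct computation.
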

We will give an explicit proof of this statement, while also introducing some notation which will be helpful later. We use superscript $x = (x^1, \dots, x^d)$ for the standard coordinates on $\mathbb{R}^d$, and write $\partial_j = \partial_{x^j}$ for the derivative in the $j$-th coordinate. If we write $\mathbf{x} =(x_1, \dots, x_n) \in \LM$ for landmarks, then we can introduce vector fields on $\LM$ by
\begin{equation} \label{partialij} \partial^i_{j} = \partial_{x^j_i}, \qquad i=1, \dots, n, \quad j=1, \dots,d. \end{equation}
These vector fields span the entire $T\LM$ and satisfy $[\partial^{i_1}_{j_1}, \partial^{i_2}_{j_2}] =0$ for $i_1,i_2=1, \dots, n$, $j_1, j_2 = 1,\dots, d$. Furthermore, if $f \in C^\infty(\mathbb{R}^d)$ is a function and $i =1, \dots, n$, we can write $f_{(i)} \in C^\infty(\LM)$ for the function $f_{(i)}(\mathbf{x}) = f(x_i)$. We then have relation $\partial_{j}^i f_{(i_2)} = \delta_{i_2}^i (\partial_j f)_{(i_2)}$. We will use this relationship in the proof.

\begin{proof}[Proof of Lemma~\ref{lemma:LieAlgHom}]
If we have two vector fields $X = \sum_{j=1}^d X^j \partial_j$ and $Y = \sum_{j=1}^d Y^j \partial_j$ on $\mathbb{R}^d$, note that $\ell X = \sum_{i=1}^n \sum_{j=1}^d X^j_{(i)} \partial^i_{j}$ and with a similar formula for $\ell Y$. Then
\begin{align}
{[\ell X,\ell Y]} & = \sum_{i=1}^n \sum_{j,k=1}^d \left( X^j_{(i)} (\partial_{j}  Y^k)_{(i)} - Y^j_{(i)} (\partial_{j}  X^k)_{(i)} \right) \partial^k_{i} \\
&= \ell \sum_{j,k=1}^d \left( X^j (\partial_{j}  Y^k) - Y^j (\partial_{j}  X^k) \right) \partial^k = \ell [X, Y].
\end{align}
The result follows.
\end{proof}

\begin{remark}
The result in Theorem~\ref{th:MAIN} is that for any dimension $d \geq 1$, there are vector fields $X$, $Y$ chosen independently of $n$, we have complete controllability with respect to $\E = \spn\{\ell X, \ell Y\}$. Equivalently, for any two collections of distinct points $(x_1, \dots, x_n)$ and $(y_1, \dots, y_n)$, there are values $t_1, \dots, t_l, s_1,\dots, s_l$ such that
$$e^{s_1 X} \circ e^{t_1 Y} \circ \cdots \circ e^{s_l X} \circ e^{t_l Y}(x_i) = (y_i), $$
simultaneously for $i=1,2, \dots, n.$
\end{remark}

\begin{remark}
 We emphasize that the interesting part of Theorem~\ref{th:MAIN} is that we are able to choose our vector fields $X$ and $Y$ before choosing the number of points~$n$. After all, if we first choose $n$, and consider the $nd$-dimensional landmark manifold $\LM^n(
 \mathbb{R}^d),$
then a generic two-dimensional subbundle of $T\LM^n(\mathbb{R}^d)$ in the sense \cite{Lobry,Mon93} will give us complete controllability. However, our result shows that we can choose vector fields $X$ and $Y$ such that $\spn\{ \ell X, \ell Y\}$ gives us controllability independent of $n$. 
\end{remark}

\begin{remark}
We can consider landmarks in a general connected manifold $M$ and still prove \Cref{lemma:LieAlgHom} in a similar way using local coordinates.
\end{remark}

\subsection{Motivation from shapes}
\label{sec:Shapes}
We will describe shapes in this paper in the following manner. For $k \leq d$, we consider \emph{a $k$-dimensional shape} as an embedding $F: \Sigma \to \mathbb{R}^d$ of a compact $k$-dimensional manifold~$\Sigma$ to~$\mathbb{R}^d$. If $F_0, F_1: \Sigma \to \mathbb{R}^d$ are two shapes, we say that \emph{a transformation} from $F_0$ to $F_1$ is a smooth homotopy $F_t$, $0 \leq t \leq 1$, between them. Such a homotopy can be constructed using a curve $\varphi_t$ in the diffeomorphism group $\Diff(\mathbb{R}^d)$ such that $F_t =\varphi_t \circ F_0$. An overview of this setting is given in \cite{KrieglMichor-1997}. Furthermore, \cite{MichorMumford-2006, MichorMumford-2007} considers the space of curves in $\R^2$ seen as equivalent up to various notions of diffeomorphism, and later work~\cite{BauerHarmsMichor-2010} introduces a collection of Sobolev Riemannian metrics on spaces of embedded manifolds seen as quotients of submanifolds equivalent up to such diffeomorphisms.

If we discretize $\Sigma$, which is often needed in many practical numerical applications \cite{JoshiMiller-2000}, we represent the manifold $\Sigma$ as a list of points $(q_1, \dots, q_n )$, and any mapping $F: \Sigma \to M$ by the values $( x_1, \dots, x_n)$ given by $F(q_j) = x_j$. Since we have an embedding, these points will all be distinct. If $\LM = \LM^n(\mathbb{R}^d)$ is the manifold of landmarks, we have a canonical mapping  $\ell: \Diff(\mathbb{R}^d) \to \Diff(\LM)$ defined such that
$$\ell \varphi(x_1, \dots, x_n) = (\varphi(x_1), \dots, \varphi(x_n)).$$
Furthermore, with this notation, we have $\ell e^{X} = e^{\ell X}$ for any vector field $X$ on $\mathbb{R}^d$. In this setting, for $d \geq 2$, Theorem~\ref{th:MAIN} says that for an arbitrary fine discretization $\mathbf{q} = (q_1, \dots, q_n)$ of $\Sigma$ and shapes $F_0, F_1: \Sigma \to \mathbb{R}^d$, we are able to transform the image of $\mathbf{q}$ under $F_0$ to that under $F_1$ using only flows of only two vector fields combined.

\subsection{Motivation from learning ODE solutions} \label{sec:NeuralODE}
We consider the problem of controlled ODEs with learned parameters, see, e.g., \cite{ChenNeurIPS,Ruiz-Balet,ALVAREZLOPEZ2024106640,ChengControl} for details. We consider a time-dependent diffeomorphism $\varphi_{t}^\theta$ as the solution of an ODE system
$$\frac{d}{dt} \varphi_t^\theta(x) = X(\varphi_t(x), \theta(t)), \qquad x \in \mathbb{R}^n, \theta(t) \in \mathbb{R}^m, t \in [0,T]$$
where $x\mapsto X(x,\theta)$ is a vector field on $\mathbb{R}^d$ depending on a parameter $\theta \in \mathbb{R}^m$. Assume that we want to learn the time-dependent vector field $\tilde X_t$ with corresponding flow $\tilde \varphi_t$ from the data $\{ \mathbf{x}(t_r)\}_{r=0}^s = \{ (x_{1}(t_r), x_{2}(t_r) ,\dots, x_{n}(t_r))\}_{r=0}^s$, $0 \leq t_1 < \cdots < t_s =T$ such that $x_{i}(t_r) = \tilde \varphi_{t_r}(x_{i}(0)) = \tilde \varphi_{t_r-t_{r-1}}(x_{i}(t_{r-1}))$ or equivalently $\mathbf{x}(t_r) = \ell \tilde \varphi_{t_r}(\mathbf{x}(0))$. Let us furthermore consider a model on the form
\begin{equation} \label{ModelTheta} X(x, \theta) = \theta^1 X_1(x) + \cdots + \theta^m X_m(x), \end{equation}
for some given vector fields $X_1, \dots, X_m$.
Focusing on a single step, we want to know if the model \eqref{ModelTheta} can match $\mathbf{x}(t_{r})$ exactly from $\mathbf{x}(t_{r-1})$ for arbitrary data given. This question is then equivalent to asking if we have controllability on the landmark manifold $\LM= \LM^n(\mathbb{R}^d)$ with respect to the subbundle $\E = \spn \{ \ell X_1, \dots, \ell X_m \} \subseteq T\LM$. Our results say that there exists vector fields $X, Y$ such that we have controllability with respect to $\E = \spn \{ \ell X, \ell Y \}$. In other words, if we have a model $X(x,\theta) = \theta^1 X + \theta^2 Y$, we can always find a function $\theta(t) = (\theta^1(t),\theta^2(t))$ so that the solution $\varphi_t^\theta$ exactly matches any single dataset of any size.

\section{Low Dimensional Cases}
We will need separate proofs for the one and two-dimensional cases. These cases also serve as helpful examples for understanding the proof in higher dimensions. Recall the basis $\partial_j^i$ for $T \LM = T\LM^n(\mathbb{R}^d)$ in \eqref{partialij}.

\subsection{Dimension one}
We consider first $d=1$ and write $\LM = \LM^n(\R)$. Observe that that this manifold is multiply connected, where each connected components have the same relative order for the points $\mathbf{x} = (x_1, \dots, x_n) \in \LM$, as it is impossible to continuously interchange the order of two one-dimensional landmarks without them at some point being equal. By relabeling our landmarks, we may assume that they are in the connected component
$$\LM_{<} = \{ (x_1, \dots, x_n) \, : \, x_j \in \mathbb{R}, x_1 < x_2 < \cdots < x_n\},$$
but the proof below works for any other component as well.
Observe that $T\LM$ is spanned by $\partial^i_1 =: \partial^i$ where $i =1, \dots,n$. If $X(x) = f(x) \partial_1 =: f(x) \partial$ is a vector field on $\R$, then
$$\ell X = f_{(1)} \partial^1 + \cdots + f_{(n)} \partial^n,$$
in the notation of Section~\ref{sec:Prelim}.
\begin{theorem}[Controllability for $d =1$] \label{th:main1}
If $X_0(x) = \partial$, $X_3(x) = x^3 \partial$ and
$$\E = \spn \{ \ell X_0, \ell X_3\}\subseteq T\LM,$$
then we have complete controllability on $\LM_<$ with respect to~$\E$ independent of $n$.
\end{theorem}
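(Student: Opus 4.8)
The strategy is to verify the bracket-generating condition for $\E = \spn\{\ell X_0, \ell X_3\}$ on $\LM_<$ and then invoke the Chow--Rashevsky theorem (\Cref{th:CR}) together with \Cref{lemma:LieAlgHom}, which tells us that iterated brackets of $\ell X_0$ and $\ell X_3$ are precisely the images under $\ell$ of the iterated brackets of $X_0$ and $X_3$ in $\Gamma(T\R)$. So the whole problem reduces to a linear-algebra statement: at a fixed configuration $\mathbf{x} = (x_1, \dots, x_n)$ with $x_1 < \cdots < x_n$, the vectors $\{\ell Z|_{\mathbf{x}} : Z \in \Lie(\spn\{X_0,X_3\})\}$ must span $\Tp{\LM}{\mathbf{x}} \cong \R^n$.

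The key computational step is to understand the Lie algebra of vector fields on $\R$ generated by $\partial$ and $x^3\partial$. Writing $L_k = x^k\partial$, one computes $[L_j, L_k] = (k-j)L_{j+k-1}$, so $[\partial, x^3\partial] = 3x^2\partial = 3L_2$, and then $[\partial, 3x^2\partial] = 6x\partial = 6L_1$, and $[\partial, 6x\partial] = 6\partial = 6L_0$. Thus brackets already produce $L_0, L_1, L_2, L_3$, i.e.\ every monomial vector field $x^k\partial$ for $k = 0,1,2,3$; taking further brackets among these (e.g.\ $[L_2, L_3] = L_4$, $[L_3, L_4] = L_6$, and generally $[L_j, L_k] = (k-j)L_{j+k-1}$) generates $L_k$ for \emph{all} $k \geq 0$ — this is the standard fact that $\{\partial, x^3\partial\}$ generates (a dense subalgebra containing) all polynomial vector fields, in particular the full Witt/Virasoro-type positive part. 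Hence the $\R$-span of $\Lie(\spn\{X_0,X_3\})$ contains all vector fields $f(x)\partial$ with $f$ a polynomial.

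Given this, I need: for any distinct $x_1 < \cdots < x_n$, the span of $\{(f(x_1), \dots, f(x_n)) : f \in \R[x]\}$ is all of $\R^n$. This is immediate from Lagrange interpolation (equivalently, the Vandermonde determinant $\prod_{i<j}(x_j - x_i) \neq 0$): choosing $f$ to be the polynomials $1, x, \dots, x^{n-1}$ already yields $n$ linearly independent evaluation vectors. Therefore $\ell L_0, \ell L_1, \dots, \ell L_{n-1}$ span $\Tp{\LM}{\mathbf{x}}$ at every point, $\E$ is bracket-generating, and \Cref{th:CR} gives complete controllability on the connected component $\LM_<$ (the argument works verbatim on any other component by relabeling). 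The reduction to products of flows $e^{s_1 X_0}\circ e^{t_1 X_3}\circ\cdots$ is then the standard description of the orbit recalled after \Cref{th:CR}.

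**Main obstacle.** There is no deep obstacle here: the content is entirely in the bookkeeping of the bracket recursion $[L_j, L_k] = (k-j)L_{j+k-1}$ and in noting that one must be slightly careful that this recursion never stalls (the coefficient $k-j$ vanishes only when $j=k$, which never obstructs reaching a new degree, since from $L_0,\dots,L_3$ one gets $L_4 = \tfrac12[L_2,L_4]$... more simply $L_4 = [L_2,L_3]$, $L_5 = \tfrac13[L_2,L_4]$, etc., so an easy induction covers all degrees). The only thing to state carefully is why finitely many brackets suffice for a given $n$ — but that is automatic since we only need $L_0, \dots, L_{n-1}$, each reachable by a bounded number of brackets depending on $n$. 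I would also note explicitly that the restriction to a fixed relative order is not an artifact of the method: $\LM^n(\R)$ is genuinely disconnected and flows preserve each component, so controllability across components is impossible in $d=1$, matching the theorem's hypothesis.
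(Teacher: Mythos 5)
Your proposal is correct and follows essentially the same route as the paper's proof: generate all monomial vector fields $x^\alpha\partial$ from $\partial$ and $x^3\partial$ via the recursion $[x^\alpha\partial, x^\beta\partial]=(\beta-\alpha)x^{\alpha+\beta-1}\partial$, then conclude linear independence of the first $n$ of them at any configuration of distinct points using the Vandermonde determinant, and finish with Chow--Rashevsky. The only cosmetic differences are your invocation of Lagrange interpolation as an alternative to the Vandermonde computation and your explicit remark on the disconnectedness of $\LM^n(\R)$, both of which are consistent with the paper.
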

Our proof is based on showing that the bracket-generating condition holds for~$\E$.
\begin{proof}
For any integer $\alpha \geq 0$, define $X_\alpha(x) = x^\alpha \partial$. We observe that
$$[X_\alpha,X_\beta] = (\beta-\alpha) X_{\alpha+\beta-1}, \qquad \text{meaning that} \qquad [\ell X_\alpha, \ell X_\beta] = (\beta-\alpha) \ell X_{\alpha+\beta-1},$$
and, in particular,
\begin{align}
{[X_0,X_3]} & = 3 X_2, & {[X_0, X_2]} & = 2X_1 
\end{align}
Furthermore, for any $\alpha \geq 3$, we have
$$[X_2, X_\alpha] = (\alpha-2) X_{\alpha+1}.$$
which means that starting with $X_0$ and $X_3$, we can obtain $X_\alpha$ of any order. In summary,
$$\spn \{ \ell X_\alpha(\mathbf{x})\}_{\alpha=0}^\infty \subseteq \Lie{\E}_{\mathbf{x}}.$$
The proof is now completed from the fact that $X_0$, $X_1$, $\dots$, $X_{n-1}$ are linearly independent. To see this last statement, we observe that in the basis $\partial^1$, $\dots$, $\partial^n,$ we can identify the mentioned vector fields as the rows of \emph{the Vandermonde matrix}
		\begin{equation} W_x =
		\begin{bmatrix}
		1 			& 1 			& \cdots 	& 1 \\
		x_1 		& x_2	 		& 			& x_n \\
		\vdots 		& 				& \ddots	& \vdots \\
		(x_1)^{n-1}		& (x_2)^{n-1}		& \cdots	& (x_n)^{n-1}
		\end{bmatrix}
		\end{equation}
		which has well-known determinant
		\begin{equation}
			\mathrm{det}(W_x) = \prod_{i \neq i_2} (x_i - x_{i_2}) \neq 0
		\end{equation}
		as all of our points are distinct. It follows that $\Lie(\E)_{x_1, \dots, x_n} = T_{x_1,\dots,x_n} \LM$ for any point and the proof is complete.
\end{proof}

\begin{remark}
For the case of $d=1$, we can actually generate the space with two uniformly bounded vector fields. As an example, take $X = \partial$ and $Y = \tanh(x) \partial$. The identity $\partial (\tanh(x))^\alpha = \alpha \tanh(x)^{\alpha-1}(1- \tanh(x)^2)$ allows us to generate all vector fields on the form $\tanh(x)^\alpha \partial$. Using a similar argument with the Vandermonde matrix, we obtain controllability. It is an interesting question to explore if we can also use bounded vector fields in higher dimensions.
\end{remark}

\subsection{Dimension two}
We can follow the same procedure for the case $d=2$, where the landmark manifold has one connected component.

\begin{theorem}[Controllability for $d =2$] \label{th:main2}
Let $\LM = \LM^n(\mathbb{R}^2)$ be the $n$-landmark manifold. If $X_{0,0}(x) = \partial_x$, $X_{1,3}(x) =  -y(3x^2 - y^2) \partial_x + x(x^2 - 3y^2) \partial_y$ and
$$\E = \spn \{ \ell X_0, \ell X_{1,3}\}\subseteq T\LM,$$
then we have complete controllability with respect to $\E$ independent of $n$.
\end{theorem}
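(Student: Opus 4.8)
The plan is to verify the bracket-generating condition for $\E = \spn\{\ell X_{0,0},\ell X_{1,3}\}$ and then invoke Theorem~\ref{th:CR}, following the same strategy as in the proof of Theorem~\ref{th:main1} but now passing to complex coordinates. Identify $\R^2$ with $\C$ via $z = x+\mathrm{i}y$, and write a real vector field $u\,\partial_x + v\,\partial_y$ as $w\,\partial_z + \bar w\,\partial_{\bar z}$ with $w = u+\mathrm{i}v$ and $\partial_z = \tfrac12(\partial_x - \mathrm{i}\partial_y)$. A direct computation (using $z^3 = (x^3-3xy^2) + \mathrm{i}(3x^2y-y^3)$, hence $\mathrm{i}z^3 = (y^3-3x^2y) + \mathrm{i}(x^3-3xy^2)$) shows that $X_{0,0}$ corresponds to $w=1$, i.e. $X_{0,0} = \partial_z + \partial_{\bar z}$, while $X_{1,3}$ corresponds to $w = \mathrm{i}z^3$, i.e. $X_{1,3} = \mathrm{i}z^3\partial_z - \mathrm{i}\bar z^3\partial_{\bar z}$. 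The holomorphic polynomial fields $z^\alpha\partial_z$ span a Lie subalgebra with exactly the one-dimensional relations $[z^\alpha\partial_z, z^\beta\partial_z] = (\beta-\alpha)z^{\alpha+\beta-1}\partial_z$, and they commute with the antiholomorphic fields $\bar z^\alpha\partial_{\bar z}$. Since complex conjugation $w\partial_z \mapsto \bar w\partial_{\bar z}$ exchanges these two commuting subalgebras, every iterated bracket of $X_{0,0}$ and $X_{1,3}$ has the form $c + \bar c$ with $c$ in the real Lie subalgebra $\mathfrak{g}$ of $\{p(z)\partial_z\}$ generated by $\partial_z$ and $\mathrm{i}z^3\partial_z$.

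The next step is to compute $\mathfrak{g}$. Bracketing $\partial_z$ against $\mathrm{i}z^3\partial_z$ three times produces in turn $\mathrm{i}z^2\partial_z$, $\mathrm{i}z\partial_z$, and $\mathrm{i}\partial_z$; bracketing $\mathrm{i}\partial_z$ back against $\mathrm{i}z^3\partial_z$ gives $-3z^2\partial_z$, and $[\mathrm{i}z\partial_z,\mathrm{i}z^3\partial_z] = -2z^3\partial_z$. The relation $[z^2\partial_z, z^\beta\partial_z] = (\beta-2)z^{\beta+1}\partial_z$ then ladders up to all $z^\alpha\partial_z$, $\alpha\geq 3$, while $[\mathrm{i}\partial_z, z^\alpha\partial_z] = \mathrm{i}\alpha z^{\alpha-1}\partial_z$ produces all $\mathrm{i}z^\alpha\partial_z$; hence $\mathfrak{g}$ contains the full complex span of $\{z^\alpha\partial_z : \alpha\geq 0\}$, in particular both $z^\alpha\partial_z$ and $\mathrm{i}z^\alpha\partial_z$ for $\alpha = 0,\dots,n-1$. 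I expect this bracket bookkeeping to be the one genuine obstacle: the holomorphic fields alone would contribute only $n$ of the $2n$ real directions at a configuration, and it is precisely the emergence of $\mathrm{i}\partial_z$ among the brackets (equivalently, of $z^2\partial_z$ with a \emph{real} coefficient) that supplies the missing ``imaginary'' directions; the factor of $\mathrm{i}$ simply has to be tracked with care.

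Finally I would translate back. Under the identification $T_{\mathbf x}\LM^n(\R^2)\cong\C^n$ sending $\sum_{i=1}^n(\xi_i^1\,\partial_1^i + \xi_i^2\,\partial_2^i)$ to $(\xi_1^1+\mathrm{i}\xi_1^2,\dots,\xi_n^1+\mathrm{i}\xi_n^2)$, and writing $z_i$ for the complex coordinate of the $i$-th landmark, the field $z^\alpha\partial_z + \bar z^\alpha\partial_{\bar z}$ pushed through $\ell$ and evaluated at $\mathbf x$ becomes $(z_1^\alpha,\dots,z_n^\alpha)$, while $\mathrm{i}z^\alpha\partial_z - \mathrm{i}\bar z^\alpha\partial_{\bar z}$ becomes $\mathrm{i}(z_1^\alpha,\dots,z_n^\alpha)$. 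By Lemma~\ref{lemma:LieAlgHom}, $\Lie(\E)_{\mathbf x}$ therefore contains the real span of $\{(z_i^\alpha)_i,\ \mathrm{i}(z_i^\alpha)_i : \alpha = 0,\dots,n-1\}$, which equals the complex span of the rows of the Vandermonde matrix in $z_1,\dots,z_n$; since the $z_i$ are pairwise distinct this determinant is nonzero, so the span is all of $\C^n = T_{\mathbf x}\LM^n(\R^2)$. Thus $\E$ is bracket-generating, and Theorem~\ref{th:CR} yields complete controllability on the connected manifold $\LM^n(\R^2)$, with $X_{0,0}$ and $X_{1,3}$ fixed independently of $n$. (Alternatively one may avoid complexification altogether by working directly with the real fields $A_\alpha = \Re(z^\alpha)\partial_x + \Im(z^\alpha)\partial_y$ and $B_\alpha = -\Im(z^\alpha)\partial_x + \Re(z^\alpha)\partial_y$, which satisfy $[A_\alpha,A_\beta] = (\beta-\alpha)A_{\alpha+\beta-1}$, $[A_\alpha,B_\beta] = (\beta-\alpha)B_{\alpha+\beta-1}$, $[B_\alpha,B_\beta] = -(\beta-\alpha)A_{\alpha+\beta-1}$, and running the same ladder starting from $X_{0,0} = A_0$ and $X_{1,3} = B_3$.)
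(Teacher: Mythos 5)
Your proposal is correct and follows essentially the same route as the paper: complexify, observe that $X_{0,0}$ and $X_{1,3}$ are the real fields corresponding to the holomorphic fields $\partial_z$ and $\mathrm{i}z^3\partial_z$, generate all $z^\alpha\partial_z$ and $\mathrm{i}z^\alpha\partial_z$ by the same bracket ladder, and conclude with the complex Vandermonde determinant and Lemma~\ref{lemma:LieAlgHom}. Your explicit remark that the holomorphic and antiholomorphic parts commute (so every iterated bracket has the form $c+\bar c$) is exactly the content of the paper's auxiliary lemma on the map $\scrF$.
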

To generalize our proof from dimension one to the case $d=2$, we identify the point $(x,y)\in \mathbb{R}^2$ with the complex number $z= x +iy \in \mathbb{C}$. Its real tangent bundle is $T\C = \spn_{\R} \{ \partial_x, \partial_y\}$, while its complexified bundle is the span $T^{\C} \C = \coprod_{z\in \C} \C \otimes T_z \C = \spn_{\C} \{ \partial, \bar{\partial}\}$,
where
$$\partial := \partial_z = \frac{1}{2} \left(\partial_x - i \partial_y \right), \qquad \bar{\partial} := \partial_{\bar z} = \frac{1}{2} \left(\partial_x + i \partial_y \right). $$
Write $T^{1,0} \C = \spn_{\C} \{ \partial \}$ for the holomorphic tangent bundle. We denote its complex sections by
$$\Gamma(T^{1,0} \C) = \{ f(z) \partial \, : \, f \in C^\infty(\C, \C)\}.$$
We will use the following property.
\begin{lemma}
The map $\scrF: \Gamma(T\C) \to \Gamma(T^{1,0}\C)$ given by
$$\scrF(f^x \partial_x + f^y \partial_y) =(f^x + i f^y) \partial, \qquad f^x, f^y \in C^\infty(\C, \R).$$
is an $\R$-linear isomorphism with inverse $\scrF^{-1}(f^z \partial) = 2\Re(f^z\partial) = (\Re f^z) \partial_x + (\Im f^z) \partial_y$ for any $f^z \in C^\infty(\C,\C)$. Moreover, assume that for real vector fields $X = X^x \partial_x + X^y \partial_y$ and $Y = Y^x \partial_x + Y^y \partial_y$, the functions
$$X^z := X^x + i X^y, \qquad Y^z := Y^x + i Y^y$$
are holomorphic. Then
$$[\scrF X, \scrF Y] = \scrF [X,Y].$$
\end{lemma}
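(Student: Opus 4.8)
The plan is to dispose of the isomorphism claim first, since it is purely formal, and then to reduce the bracket identity to the computation $[f\partial,g\partial]=(f\partial g-g\partial f)\partial$ together with the single structural fact about how a real vector field with holomorphic ``complexification'' acts on holomorphic functions. For the isomorphism: $\scrF$ is $\R$-linear because it sends the real pair of coefficients $(f^x,f^y)$ to the complex coefficient $f^x+if^y$, and this assignment is $\R$-linear (though not $\C$-linear, which is why one must stay careful below). For the inverse formula I would first substitute $\partial=\tfrac12(\partial_x-i\partial_y)$ into $f^z\partial$ and take real parts of the resulting coefficients, giving the identity $2\Re(f^z\partial)=(\Re f^z)\partial_x+(\Im f^z)\partial_y$ asserted in the statement. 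With this, $\scrF(2\Re(f^z\partial))=(\Re f^z+i\Im f^z)\partial=f^z\partial$, and conversely $2\Re(\scrF(f^x\partial_x+f^y\partial_y))=2\Re((f^x+if^y)\partial)=f^x\partial_x+f^y\partial_y$ using that $f^x,f^y$ are real-valued; hence both composites are the identity.

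The heart of the matter is the bracket identity. I would isolate the following fact: a real vector field $X=X^x\partial_x+X^y\partial_y$, extended as a derivation to complex-valued functions, equals $X^z\partial+\overline{X^z}\,\bar\partial$. This follows from $\partial_x=\partial+\bar\partial$ and $\partial_y=i(\partial-\bar\partial)$ together with $X^x-iX^y=\overline{X^z}$ (here $X^x,X^y$ real). Consequently, applying $X$ to the \emph{holomorphic} function $Y^z$ and using $\bar\partial Y^z=0$ gives $XY^z=X^z\,\partial Y^z$, and symmetrically $YX^z=Y^z\,\partial X^z$. This is the only place the holomorphy hypothesis is used, and it is essential: without discarding the antiholomorphic part the identity is false.

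It remains to assemble the two sides. On the one hand, the usual coordinate formula for the Lie bracket of real vector fields together with the $\R$-linearity of $\scrF$ yields
\[
\scrF[X,Y]=\bigl(XY^x-YX^x+i(XY^y-YX^y)\bigr)\partial=(XY^z-YX^z)\partial=(X^z\,\partial Y^z-Y^z\,\partial X^z)\partial,
\]
the last equality by the fact just established. On the other hand, $\scrF X=X^z\partial$ and $\scrF Y=Y^z\partial$, and since $\partial$ is a derivation with constant coefficients, the elementary computation $[f\partial,g\partial]=(f\,\partial g-g\,\partial f)\partial$ (valid for arbitrary smooth $f,g$, the second-order terms cancelling) gives $[\scrF X,\scrF Y]=(X^z\,\partial Y^z-Y^z\,\partial X^z)\partial$. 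Comparing the two expressions completes the proof. I do not anticipate a genuine obstacle here; the only care required is bookkeeping with the Wirtinger derivatives $\partial,\bar\partial$ and remembering that $\scrF$ is merely $\R$-linear, so that the passage between the real components and the complex coefficient must be done explicitly rather than by pulling scalars through $\scrF$.
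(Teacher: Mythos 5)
Your proof is correct and follows essentially the same route as the paper: both rest on the decomposition $X = X^z\partial + \overline{X^z}\,\bar\partial$ and on holomorphy killing the antiholomorphic contributions. The only organizational difference is that the paper brackets the decomposed fields and projects onto $T^{1,0}\C$ at the end, while you compute $\scrF[X,Y]$ and $[\scrF X,\scrF Y]$ separately and compare; the substance is identical.
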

\begin{proof}

Define $\bar{\scrF} X = \bar{X}^z \bar{\partial}$. We observe that for the vector field $X$, we have $X =\scrF X + \bar{\scrF} X$ and similarly for $Y$. Since $X^z$ and $Y^z$ are holomorphic, we have that $\bar{\partial} X^z = \bar{\partial} Y^z  = \partial \bar{X}^z = \partial \bar{Y}^z=0$. We then compute
\begin{align}
[X, Y] & = \scrF [X, Y] + \bar{\scrF}[X, Y]  = [X^z \partial + \bar{X}^z \bar{\partial}, Y^z \partial + \bar{Y}^z \bar{\partial} ] \\
& = (X^z \partial Y^z - Y^z \partial X^z ) \partial + \overline{(X^z \partial Y^z - Y^z \partial X^z ) \partial} \\
& = [\scrF X, \scrF Y] + [\bar{\scrF}X, \bar{\scrF} Y].
\end{align}
Projecting to $T^{1,0}\C$, we have the result.
\end{proof}

\begin{proof}[Proof of \Cref{th:main2}]
For $\alpha =0,1,2 \dots$, define $X_{0,\alpha}$ and $X_{1,\alpha}$ respectively by
$$\scrF X_{0,\alpha} = z^\alpha \partial \qquad \scrF X_{1,\alpha} = i z^\alpha \partial,$$
Since $\scrF X_{0,0} = \partial$ and $\scrF X_{1,3} = i z^3 \partial$, we note that
\begin{align}
{[\partial, iz^3 \partial]} & = i 3z^2 \partial, & {[\partial, iz^2 \partial ]} & = 2iz \partial, \\
{[\partial, iz \partial]} & = i  \partial, & {[i\partial, iz^3 \partial ]} & = -3z^2 \partial, \\
{[iz\partial, iz^3 \partial]} & = -z^3  \partial.
\end{align}
Finally, using that $[z^2\partial, z^\alpha \partial] = (\alpha-2) z^{\alpha+1} \partial$ and $[z^2\partial, iz^\alpha \partial] = (\alpha-2) iz^{\alpha+1} \partial$, we can generate any vector field of the form $z^\alpha \partial$ and $iz^\alpha \partial$ with iterated brackets of $\partial$ and $i z^3 \partial$. It follows that
$$\spn_{\mathbb{R}} \{ \ell X_{0,\alpha}(z_1, \dots, z_n), \ell X_{1,\alpha}(z_1, \dots, z_n)\}_{\alpha=0}^\infty \subseteq \Lie(\E)_{z_1,\dots, z_n}.$$
To show that these vector fields are all linearly independent, let $\GL(n,\mathbb{C})$ and $\GL(n, \R)$ be the general linear group of respectively the complex numbers and the real numbers, which are the groups consisting of $n\times n$ invertible matrices. Recall that we have an injective group homomorphism $\GL(n,\C) \to \GL(2n,\R)$ by sending each complex entry $a_{jk}$ to a $2 \times 2$ real block
$$a_{jk} \mapsto \begin{pmatrix} \Re(a_{jk}) & \Im(a_{jk}) \\ - \Im(a_{jk}) & \Re(a_{jk}) \end{pmatrix} = \begin{pmatrix} \Re(a_{jk}) & \Im(a_{jk}) \\ \Re(ia_{jk}) & \Im(ia_{jk}) \end{pmatrix}.$$
If we write $\ell X_{0,1}, \ell X_{1,1}, \dots, \ell X_{0,n-1}, \ell X_{1,n-1}$ as rows in an $2n \times 2n$ matrix in the basis $\partial^1_x, \partial^1_y, \dots , \partial^n_x, \partial^n_y$, it is clear that this is just the image of the $n\times n$ complex matrix
\begin{equation} W_z =
		\begin{bmatrix}
		1 			& 1 			& \cdots 	& 1 \\
		z_1 		& z_2	 		& 			& z_n \\
		\vdots 		& 				& \ddots	& \vdots \\
		(z_1)^{n-1}		& (z_2)^{n-1}		& \cdots	& (z_n)^{n-1}
		\end{bmatrix}
		\end{equation}
which is again invertible using the Vandermonde determinant. The result follows.
\end{proof}

\section{Higher Dimensional Cases}
Having dealt with the special case of dimension one and two, we will give a general approach for all dimensions higher than $3$.
For clarity, we reserve the Latin letters $i$, $j$ for indices and the Greek letters $\alpha$, $\beta$ for exponents. Furthermore, it will be convenient to denote coordinates in $\mathbb{R}^d$ modulo $d$, such that for example $x^0 = x^d$ and $\partial_{x^{d+1}} = \partial_{d+1} =\partial_1$. Again we write $\LM = \LM^n(\mathbb{R}^d)$.

\begin{theorem} \label{th:Higher}
Consider the vector field on $\mathbb{R}^d$ for $d \geq 3$,
\begin{equation}
Y = \sum_{k=1}^d \left( \sum_{j=1}^d (x^j)^2 \right) x^k \partial_{k+1} =\| x\|^2 \sum_{k=1}^d  x^k \partial_{k+1} \in \s{T\R^d}
\end{equation}
If $\E= \spn \{ \ell \partial_1, \ell Y\} = \E \subseteq T\LM$,
then we have complete controllability with respect to $\E$ independent of $n$.
\end{theorem}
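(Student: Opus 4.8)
The strategy is to combine \Cref{th:CR} with the Lie algebra homomorphism property of $\ell$ from \Cref{lemma:LieAlgHom}. Write $X=\partial_1$ and let $\mathfrak g\subseteq\s{T\R^d}$ be the Lie subalgebra generated by $X$ and $Y$. Since $\ell[Z,W]=[\ell Z,\ell W]$, the subspace $\Lie(\E)_{\mathbf x}$ contains $\ell Z|_{\mathbf x}$ for every $Z\in\mathfrak g$ and every $\mathbf x=(x_1,\dots,x_n)\in\LM$. Thus it suffices to prove that $\mathfrak g$ is large enough that, for every $n$ and every choice of distinct points $x_1,\dots,x_n\in\R^d$, the evaluation map $Z\mapsto(Z(x_1),\dots,Z(x_n))$ from $\mathfrak g$ to $(\R^d)^n$ is surjective. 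Because $X$ and $Y$ are polynomial vector fields, $\mathfrak g$ sits inside the Lie algebra $W_d$ of all polynomial vector fields on $\R^d$; the plan is to show the equality $\mathfrak g=W_d$, after which a Lagrange interpolation argument (choose polynomials $\lambda_k$ with $\lambda_k(x_l)=\delta_{kl}$ and form the field $\sum_k\lambda_k\,v_k$) yields the required surjectivity, hence bracket-generation, hence complete controllability on the connected manifold $\LM^n(\R^d)$ with $X,Y$ fixed independently of $n$.

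To prove $\mathfrak g=W_d$ I would use the grading $W_d=\bigoplus_{k\geq-1}\mathfrak g_k$ by the degree of the coefficients, so that $\mathfrak g_{-1}\cong\R^d$ consists of the constant (translation) fields, $\mathfrak g_0\cong\mathfrak{gl}(d)$ of the linear fields, $\mathfrak g_1\cong S^2(\R^d)^*\otimes\R^d$ of the quadratic fields, and $[\mathfrak g_i,\mathfrak g_j]\subseteq\mathfrak g_{i+j}$. Set $V=\sum_{k=1}^d x^k\partial_{k+1}$, so $Y=\|x\|^2V$. The first step is to show $\mathfrak g_{-1}\oplus\mathfrak g_0\subseteq\mathfrak g$: one computes $[\partial_i,Y]=2x^iV+\|x\|^2\partial_{i+1}$, then $[\partial_i,[\partial_i,Y]]=2V+4x^i\partial_{i+1}$, then $[\partial_i,[\partial_i,[\partial_i,Y]]]=6\,\partial_{i+1}$; starting from $X=\partial_1$ this gives every translation $\partial_j$. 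Summing $[\partial_i,[\partial_i,Y]]$ over $i$ gives $(2d+4)V\in\mathfrak g$, hence each $x^i\partial_{i+1}\in\mathfrak g$; bracketing these cyclically — where the hypothesis $d\geq3$ is exactly what makes the spurious terms $\delta^i_{i+m+1}$ vanish at the intermediate steps — produces $x^i\partial_j$ for all $i\neq j$, and combining with the brackets $[\partial_j,[\partial_1,Y]]$ recovers the diagonal fields $x^j\partial_j$ as well. Hence $\mathfrak g_0=\mathfrak{gl}(d)\subseteq\mathfrak g$, and therefore $\mathfrak g_{-1}\oplus\mathfrak g_0\subseteq\mathfrak g$.

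The next step — which I expect to be the main obstacle — is $\mathfrak g_1\subseteq\mathfrak g$. As a module over $\mathfrak g_0=\mathfrak{gl}(d)$ acting by the bracket, $\mathfrak g_1\cong S^2(\R^d)^*\otimes\R^d$ decomposes into exactly two irreducible, non-isomorphic summands: the image $(\R^d)^*$ of the $\mathfrak{gl}(d)$-equivariant divergence map $Z\mapsto\sum_c\partial_c Z^c$, and its complement, the divergence-free ("trace-free") part. We have already produced the quadratic field $Q_1:=[\partial_1,Y]=2x^1V+\|x\|^2\partial_2\in\mathfrak g_1\cap\mathfrak g$, and a short explicit check shows $\operatorname{div}Q_1=2x^2+2x^d\neq0$ and that $Q_1$ does not lie in the equivariant complement (e.g. its $\partial_3$-component $2x^1x^2$ cannot occur in a complement element); that is, $Q_1$ has nonzero projection onto \emph{both} summands. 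Since $\mathfrak{gl}(d)\subseteq\mathfrak g$, the $\mathfrak{gl}(d)$-submodule of $\mathfrak g_1$ generated by $Q_1$ is contained in $\mathfrak g$, and having a component in each irreducible summand it must be all of $\mathfrak g_1$. Thus $\mathfrak g_1\subseteq\mathfrak g$. If one wishes to avoid the representation theory, the same conclusion can be reached by a longer but elementary argument: bracket $Q_1,\dots,Q_d$ against an explicit basis of $\mathfrak{gl}(d)$ to exhibit by hand the spanning set $\{\|x\|^2\partial_j\}\cup\{x^ax^b\partial_c\}$ of $\mathfrak g_1$.

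Finally, an induction on $k\geq2$ gives $\mathfrak g_k\subseteq\mathfrak g$: the bracket $[\mathfrak g_1,\mathfrak g_{k-1}]\to\mathfrak g_k$ is surjective, a routine verification — writing a monomial field $x^\alpha\partial_e$ of degree $k+1$ as a bracket of the shape $[x^ax^b\partial_c,\,x^cx^\beta\partial_e]$ with the indices chosen distinct, which is again possible because $d\geq3$ leaves enough free indices. Hence $\mathfrak g=\bigoplus_{k\geq-1}\mathfrak g_k=W_d$, which by the reduction in the first paragraph completes the proof. Besides the $\mathfrak g_1$ step, the only real care needed is bookkeeping of the cyclic index convention $x^{d+1}=x^1$ in the bracket computations with $Y$, and checking at each stage that $d\geq3$ supplies enough distinct indices.
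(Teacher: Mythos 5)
Your argument is correct in outline and reaches the same conclusion, but the core step is carried out by a genuinely different method. The paper's proof is entirely computational: it generates, one family at a time, the monomial fields $(x^j)^\alpha\partial_k$ of degrees $0,1,2,3$ and then $\alpha\geq 3$ by explicit iterated brackets, assembles general monomials from these, and finally invokes \Cref{lemma:polynomials} (interpolation by polynomials vanishing at the other landmarks) to get bracket-generation. You share the same skeleton --- reduce via \Cref{lemma:LieAlgHom} and \Cref{th:CR} to showing the Lie algebra $\mathfrak{g}$ generated by $\partial_1$ and $Y$ is large, then interpolate --- and your degree $-1$ and $0$ computations essentially reproduce the paper's (your derivation of $V=\sum_k x^k\partial_{k+1}\in\mathfrak{g}$ by summing $[\partial_i,[\partial_i,Y]]$ over $i$ is in fact cleaner; note only that the diagonal fields $x^m\partial_m$ come from $[\partial_{m-1},[\partial_m,Y]]$ rather than from $[\partial_j,[\partial_1,Y]]$ as written). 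Where you genuinely diverge is in degree one and above: instead of hand-computing quadratic and cubic monomials, you observe that once $\mathfrak{g}_0=\mathfrak{gl}(d)$ is available, $\mathfrak{g}_1\cong S^2(\R^d)^*\otimes\R^d$ splits into exactly two non-isomorphic irreducibles (the divergence part $\cong(\R^d)^*$ and its traceless complement), and that $Q_1=[\partial_1,Y]$ projects nontrivially onto both --- your checks $\operatorname{div}Q_1=2x^2+2x^d\neq 0$ and the $\partial_3$-component $2x^1x^2$ are correct --- so the $\mathfrak{gl}(d)$-module it generates is all of $\mathfrak{g}_1$. This buys a conceptual explanation for why a single cubic field suffices, at the cost of importing representation theory; the paper's route is longer but self-contained and makes the role of $d\geq 3$ visible in each bracket.

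One step of yours is not yet complete: the inductive claim $[\mathfrak{g}_1,\mathfrak{g}_{k-1}]=\mathfrak{g}_k$ for $k\geq 2$. The recipe $x^\alpha\partial_e=\tfrac12[x^ax^b\partial_c,\,x^cx^\beta\partial_e]$ requires $e\notin\{a,b\}$ and $c$ absent from $\{a,b\}\cup\beta$, which fails exactly for the pure powers $(x^e)^{k+1}\partial_e$ --- the same exceptional case the paper treats separately (note its coefficient $(\alpha-2)$, which vanishes at $\alpha=2$ and forces a special computation for $(x^j)^3\partial_j$). The fix is short but should be recorded: for instance
\begin{equation}
[(x^1)^2\partial_2,\,(x^1)^{k-1}x^2\partial_1]=(x^1)^{k+1}\partial_1-2(x^1)^k x^2\partial_2,
\qquad
[(x^1)^2\partial_3,\,(x^1)^{k-2}x^2x^3\partial_2]=(x^1)^k x^2\partial_2,
\end{equation}
using $d\geq 3$ for the second bracket, whence $(x^1)^{k+1}\partial_1\in[\mathfrak{g}_1,\mathfrak{g}_{k-1}]$. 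With that supplement your induction closes, $\mathfrak{g}$ is the full Lie algebra of polynomial vector fields, and the interpolation argument (equivalent to \Cref{lemma:polynomials}) finishes the proof.
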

The proof of this theorem uses several times that we have at least~3 different coordinate indices from $1, \dots, d$, which is why a separate proof is needed for dimensions one and two. Also, the index 1 in $\partial_1$ plays no special role and we could have replaced it with any $\partial_j$ for $j=1, \dots, d$.

We continue by showing that polynomial vector fields are enough in higher dimensions to generate the tangent bundle. Recall the definition of the vector fields $\partial_j^i$ in \eqref{partialij}.

\begin{lemma} \label{lemma:polynomials} Consider the landmark manifold $\LM = \LM^n(\mathbb{R}^d)$.
Let $\E$ be a subbundle of $T\LM$ with $\Lie(\E)$ denoting all vector fields that can be generated from iterated brackets of its sections. Assume that for any $j=1,\dots, d$ and for any polynomial $p$ on $\mathbb{R}^d$ of degree equal to $n-1$,
$$\ell (p \partial_j) \in \Lie(\E).$$
Then $\E$ is bracket-generating.
\end{lemma}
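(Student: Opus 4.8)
The plan is to prove the statement directly by fixing an arbitrary landmark configuration $\mathbf{x} = (x_1,\dots,x_n) \in \LM$ and showing $\Lie(\E)_{\mathbf{x}} = T_{\mathbf{x}}\LM$. Since $T\LM$ is spanned by the coordinate vector fields $\partial^i_j$ of \eqref{partialij} and $\Lie(\E)$ is an $\R$-linear subspace of $\Gamma(T\LM)$ closed under brackets, it suffices to produce, for each $i_0 \in \{1,\dots,n\}$ and each $j \in \{1,\dots,d\}$, an element of $\Lie(\E)$ whose value at $\mathbf{x}$ is $\partial^{i_0}_j|_{\mathbf{x}}$. By hypothesis every $\ell(p\,\partial_j)$ with $\deg p = n-1$ lies in $\Lie(\E)$, and from the formula $\ell(p\,\partial_j) = \sum_{i=1}^n p_{(i)}\,\partial^i_j$ (notation of Section~\ref{sec:Prelim}) we get the evaluation
$$\ell(p\,\partial_j)\big|_{\mathbf{x}} = \sum_{i=1}^n p(x_i)\,\partial^i_j\big|_{\mathbf{x}}.$$
Thus the lemma reduces to a multivariate Lagrange-interpolation fact: for distinct $x_1,\dots,x_n \in \R^d$ there is a polynomial $p$ of degree $n-1$ with $p(x_{i_0}) = 1$ and $p(x_i) = 0$ for all $i \neq i_0$.

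Second, I would construct such a $p$ explicitly. For each $i \neq i_0$, distinctness $x_i \neq x_{i_0}$ gives a linear functional $\lambda_i$ on $\R^d$ with $\lambda_i(x_i) \neq \lambda_i(x_{i_0})$ (take a coordinate functional on which $x_i - x_{i_0}$ has a nonzero entry), so $\ell_i(y) := \lambda_i(y) - \lambda_i(x_i)$ is an affine function vanishing at $x_i$ but not at $x_{i_0}$. Setting
$$p := \prod_{i \neq i_0} \frac{\ell_i}{\ell_i(x_{i_0})},$$
we obtain $p(x_{i_0}) = 1$ and $p(x_i) = 0$ for every $i \neq i_0$. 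Substituting into the displayed evaluation gives $\ell(p\,\partial_j)|_{\mathbf{x}} = \partial^{i_0}_j|_{\mathbf{x}}$, and letting $i_0$ and $j$ vary shows that the $\partial^{i_0}_j|_{\mathbf{x}}$ all lie in $\Lie(\E)_{\mathbf{x}}$, hence $\Lie(\E)_{\mathbf{x}} = T_{\mathbf{x}}\LM$; as $\mathbf{x}$ was arbitrary, $\E$ is bracket-generating.

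The only points requiring care — rather than a genuine obstacle — are bookkeeping ones. The hypothesis is stated for polynomials of degree \emph{exactly} $n-1$, so I need the interpolating polynomial to have degree exactly $n-1$: the product above is a product of $n-1$ nonconstant affine factors, and its top-degree part $\prod_{i\neq i_0}\lambda_i/\ell_i(x_{i_0})$ is a product of nonzero linear forms, hence a nonzero homogeneous polynomial of degree $n-1$, so $\deg p = n-1$ precisely (and the normalizing scalars are nonzero, so they do not change the degree). The case $n=1$ is the degenerate one: there the product is empty, $p \equiv 1$ has degree $0 = n-1$, and $\ell(p\,\partial_j)|_{x_1} = \partial^1_j|_{x_1}$, so the claim is immediate. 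I expect no real difficulty in this lemma; the substance of the paper is in \emph{verifying its hypothesis} — that iterated brackets of $\spn\{\ell X,\ell Y\}$ reach $\ell(p\,\partial_j)$ for every $j$ and every polynomial $p$ of degree $n-1$ — which is the content of \Cref{th:main1}, \Cref{th:main2}, and \Cref{th:Higher}.
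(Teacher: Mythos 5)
Your proof is correct and follows essentially the same route as the paper: build a Lagrange-type interpolating polynomial of degree $n-1$ as a product of affine factors vanishing at the other landmarks, and evaluate $\ell(p\,\partial_j)$ at $\mathbf{x}$ to isolate $\partial^{i_0}_j|_{\mathbf{x}}$. If anything, your version is slightly more careful than the paper's, which writes the product as $\prod_{i_2\neq i}(x-y_{i_2})$ with $x,y_{i_2}\in\R^d$ and leaves the choice of linear functionals implicit, whereas you make the factors $\lambda_i(y)-\lambda_i(x_i)$ explicit and also check the exact-degree and $n=1$ edge cases.
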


\begin{proof}
For a point $\mathbf{y} =(y_1, \dots, y_n) \in \mathbb{R}^d$ and $i=1, 2,\dots, n$, define a homogeneous $n-1$-degree polynomial $p_{i,\mathbf{y}}: \mathbb{R}^d \to \mathbb{R}$ by
$$p_{i,\mathbf{y}}(x) = \prod_{i_2 \neq i} (x-y_{i_2}), \qquad x \in \mathbb{R}^d.$$
We then observe that for any $\mathbf{x} \in LM$,
$$\ell(p_{i,\mathbf{x}}\partial_j)|_{\mathbf{x}} = p_{i,\mathbf{x}}(x_i) \partial_{j}^i|_{\mathbf{x}} \qquad \text{where $p_{i,\mathbf{x}}(x_i) \neq 0$ by definition of $\LM$}.$$
Hence, $T_{\mathbf{x}} \LM = \Lie(\E)_{\mathbf{x}}$ for every $\mathbf{x} \in \LM$.
\end{proof}

\begin{proof}[Proof of \Cref{th:Higher}]
We will first show that that we can use Lie brackets to generate any vector field on the form $(x^j)^\alpha \partial_k$ from $\partial_1$ and $Y$ for $j,k =1, \dots, d$, $\alpha = 0,1, 2, \dots$. We will then show that this is sufficient to generate all polynomials.

Recall that indices are denoted modulo $d$. Observe that
\begin{align}
[\partial_j, Y] &= 2 x^j \sum_{k=1}^d x^k \partial_{k+1} + \|x\|^2 \partial_{j+1}, \\
[\partial_j, [\partial_j, Y]] &= 2 \sum_{k=1}^d x^k \partial_{k+1} + 4 x^j \partial_{j+1}  ,\\
[\partial_j, [\partial_j, [\partial_j, Y]]] &= 6\partial_{j+1}.
\end{align}
Therefore, starting with $\partial_1$, it follows that we can inductively generate all degree 0 monomials vector field $\partial_j$.

For degree one vector fields, observe that for $j \neq k$,
\begin{align}
\frac{1}{2} [\partial_j, [\partial_k, Y]] &=  x^k  \partial_{j+1} + x^j \partial_{k+1} ,\\
\frac{1}{4}([\partial_j, [\partial_j, Y]]-[\partial_k, [\partial_{k}, Y]]]) &=  x^j \partial_{j+1} - x^k \partial_{k+1}.
\end{align}
Since we are assuming $d >2$, we have $j+2 \neq j \bmod d$, and hence we can use
$$[x^{j+1} \partial_{j+1} + x^j \partial_{j+2}, x^{j+2} \partial_{j+1} + x^{j} \partial_{j+3}]  = (x^j  - x^{j+2}) \partial_{j+1}.$$
Furthermore, we observe that
$$[x^{j-1} \partial_{j+1} + x^j \partial_j, (x^j - x^{j+2}) \partial_{j+1}] = x^j \partial_{j+1},$$
giving us all monomials of the form $x^j\partial_{j+1}$. Combining these vector fields, we first obtain $[x^{j} \partial_{j+1}, x^{j+1} \partial_{j+2}] = x^j \partial_{j+2}$ and iterating this bracket, we are able to generate vector fields of the form $x^l \partial_{l}$, $j \neq l$ with our Lie brackets. Finally,
\begin{equation}
[\partial_{j-1}, [\partial_j, Y]] = 2(x^{j-1}\partial_{j+1} + x^j\partial_j)
\end{equation}
and so by linear combination with previously acquired terms, we obtain vector fields, $x^j \partial_{j}$, giving us a complete set of all degree one monomials $x^j\partial_l$, $j,l=1,\dots, d$.

In degree two, we consider the brackets
\begin{align}
[x^j \partial_j, [\partial_j, Y]] &= 2 x^j \sum_{k=1}^d x^k \partial_{k+1} + 4 (x^j)^2 \partial_{j+1} - 2 x^j x^{j-1} \partial_{j} \\
& = 6(x^j)^2 \partial_{j+1} + \sum_{l \neq j, j-1} 2x^j x^l \partial_{l+1} , \\
[x^j \partial_j, [x^j \partial_j, [\partial_j, Y]]] & = 12(x^j)^2 \partial_{j+1} + \sum_{l \neq j, j-1} 2x^j x^l \partial_{l+1} ,
\end{align}
from which we can take the difference to generate all terms of the form $(x^j)^2 \partial_{j+1}$.  For $j \neq k+1$ it holds
\begin{align} \label{Special}
[x^j\partial_k, (x^k)^2\partial_{k+1}] &= 2x^kx^j \partial_{k+1} ,\\
[x^j\partial_k, [x^j\partial_k, (x^k)^2\partial_{k+1}]] &= 2(x^j)^2 \partial_{k+1} ,
\end{align}
giving us all monomials $(x^j)^2 \partial_k$ with $k \neq j$. To get the final second order monomial, we note that
\begin{align}[x^k \partial_j, (x^j)^2 \partial_k] &= 2x^k x^j \partial_k - (x^j)^2 \partial_j, \\
[x^k \partial_{j}, x^{k-1} x^j \partial_k] &= x^k x^{k-1} \partial_k - x^{k-1} x^j \partial_{j} ,
\end{align}
and combining these identities with \eqref{Special}, and using linear combinations, we are finally able to generate monomials $(x^j)^2 \partial_j$, giving us all degree two monomials $(x^j)^2\partial_k$ for $j,k = 1, \dots, d$.

In degree three, it follows that for $j,k,l$ all different
\begin{align}
[(x^k)^2\partial_l, (x^l)^2 \partial_j] &= 2(x^k)^2 x^l \partial_j ,\\
[x^k \partial_l, [(x^k)^2\partial_l, (x^l)^2 \partial_j] &= 2(x^k)^3 \partial_j,
\end{align}
For identical coefficients, we first compute
% \begin{align}
% & [x^{j+1} \partial_j ,[x^{j+1} \partial_j,[x^{j+1} \partial_j, \sum (x^l)^2 x^k \partial_{k+1}]]] \\
% & = 2[x^{j+1} \partial_j ,[x^{j+1} \partial_j, \sum x^j x^{j+1} x^k \partial_{k+1}]]\\
% & \qquad + [x^{j+1} \partial_j,[x^{j+1} \partial_j, \sum (x^l)^2 x^{j+1} \partial_{j+1}]] \\
% & \qquad - [x^{j+1} \partial_j,[x^{j+1} \partial_j, \sum (x^l)^2 x^j \partial_{j}]] \\
% & = 2 [x^{j+1} \partial_j , \sum (x^{j+1})^2 x^k \partial_{k+1}]+ 4 [x^{j+1} \partial_j ,  x^j (x^{j+1})^2 \partial_{j+1}]\\
% & \qquad - 4[x^{j+1} \partial_j, (x^j)^2 x^{j+1} \partial_{j}]  - [x^{j+1} \partial_j, \sum (x^l)^2 x^{j+1} \partial_{j}] \\
% & = 6 (x^{j+1})^3 \partial_{j+1} - 16x^j (x^{j+1})^2 \partial_{j}   \\
% \end{align}
\begin{align}
[x^{j+1}\partial_j, [x^{j+1}\partial_j, [x^{j+1}\partial_j, Y]]] &= 6(x^{j+1})^3 \partial_{j+1} - 16(x^{j+1})^2 x^j \partial_j,
\end{align}
and then
\begin{equation}
[x^{j+1}\partial_j, [x^{j+1}\partial_j, [x^{j+1}\partial_j, Y]]] + 8 [(x^{j+1})^2\partial_j, (x^j)^2 \partial_j] = 6(x^{j+1})^3 \partial_{j+1}.
\end{equation}
In total, we obtain all pure degree three monomials $(x^j)^3 \partial_k$.

For $\alpha \geq 3$ it holds that
\begin{equation}
[(x^i)^2\partial_i, (x^i)^\alpha \partial_j] = \begin{cases}
\alpha(x^i)^{\alpha+1} \partial_j & i \neq j \\
(\alpha-2)(x^i)^{\alpha+1} \partial_i & i = j \\
\end{cases}
\end{equation}
from which we generate all pure monomials $(x^j)^\alpha \partial_i$. Finally, we show that we can obtain arbitrary polynomials. Any polynomial vector field can be written as a sum of monomials
$$(x^{i_1})^{\alpha_1} \cdots (x^{l})^{\alpha_l} (x^j)^{N - l} \partial_j, \qquad i_r \neq j, r=1, \dots, l$$
and we can obtain such a vector field by starting with $(x^j)^N \partial_j$ and then in order take the bracket with vector field $(x^{i_r})^{\alpha_r} \partial_j$ for $j=1, \dots l$. The result now follows from \Cref{lemma:polynomials}.
\end{proof}

%%%%%%%%%%%%%%%%%%%%%%%%%%%%%%%%%%%%%%%%%%%%%%%%%%%%%%%%%%
%% Bibliography %%%%%%%%%%%%%%%%%%%%%%%%%%%%%%%%%%%%%%%%%%
%%%%%%%%%%%%%%%%%%%%%%%%%%%%%%%%%%%%%%%%%%%%%%%%%%%%%%%%%%

\bibliography{biblio}
\bibliographystyle{habbrv}
\end{document}